\newtheorem{problem}{Question}
\newtheorem{theorem}[problem]{Theorem}
\newtheorem{lemma}[problem]{Lemma}
\newtheorem{conjecture}[problem]{Conjecture}
\title{Greedy Gossiping}
\author{Kada K Williams}
\begin{document}

\maketitle

\begin{abstract}
The renowned Gossiping Problem (1971) asks the following. There are $n$ people who each know an item of gossip. In a telephone call, two people share all the gossip they know. How many calls are needed for all of them to be informed of all the gossip? If $n\ge 4$, the answer is $2n-4$.

We initiate and solve the related Greedy Gossiping Problem: given a fixed number $m<2n-4$ of calls, at most how much gossip can be known altogether? If every call increases the total knowledge of gossip as much as possible, the sum reaches $n^2$ only when $m=2n-3$. Our main result is that surprisingly, for each $m<2n-4$, this calling strategy is optimal.
\end{abstract}

\section{Introduction}

In 1971, Boyd asked the following question \cite{HMS}. There are $n$ people who each know an item of gossip. In a telephone call, two people share all the gossip they know. How many calls are needed for all of them to be informed of all the gossip?

This problem, commonly referred to as the 'Gossiping Problem', was independently resolved by Tijdeman \cite{Tij}, Baker and Shostak \cite{BSh}, as well as Hajnal, Milner, and Szemerédi \cite{HMS}. Possibly due to the simplicity of and variety of approaches to the Gossiping Problem, Bollobás additionally introduced it as a challenging 'starred question' for undergraduate students \cite{Bol}.

In this note, we tackle the related question of what happens when the number of calls is restricted to $m$ calls. The dons call greedily in the sense that if each don counted how much gossip she knows, the sum total $N$ would be largest possible. We characterise the optimal value of $N$ as follows.

\begin{theorem} \label{main}
Let $0\le m\le 2n-5$. In Gossiping, suppose that after $m$ calls, the number of news known is $N$ altogether. The maximal value of $N$ is achieved by the first $m$ of the following $2n-3$ calls: dons $2,3,\dots,n$ call don $1$, then don $1$ calls dons $2,3,4,\dots,n-1$.
\end{theorem}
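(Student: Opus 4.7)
The plan is to prove $N_\sigma(m) \le N^{\text{greedy}}(m)$ for every strategy $\sigma$ by induction on $m$, splitting the argument into two regimes according to whether $m$ lies in greedy's phase~1 or phase~2. Each call $(a,b)$ raises the total $\sum_i |S_i|$ by $|S_a \triangle S_b|$; direct computation along the greedy sequence gives per-call gains $2, 3, \ldots, n$ in phase~1 and $n-2, n-3, \ldots, 1$ in phase~2, so $N^{\text{greedy}}(m) = n + \sum_{t=1}^{m} g_t^{\text{greedy}}$.

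In phase~1 ($m \le n-1$) the key is a per-call bound, which follows from the structural fact that for any nonempty $I \subseteq \{1, \ldots, n\}$ and any time $t$, $\bigl|\bigcup_{i \in I} S_i^t\bigr| \le t + |I|$. A short induction on $t$ proves this: a call $(c,d)$ either leaves $\bigcup_{i \in I} S_i$ unchanged, or extends it by $S_d^{t-1}$, in which case the hypothesis applied to $I \cup \{d\}$ still gives the bound. Specialising to $|I| = 2$ yields $|S_a^{t-1} \triangle S_b^{t-1}| \le t + 1$, so in any strategy the gain in call $s \le n-1$ satisfies $g_s^\sigma \le s + 1 = g_s^{\text{greedy}}$, and summing term by term proves the theorem in this regime.

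In phase~2 ($n-1 < m \le 2n - 5$) the per-call bound becomes loose: some alternative states at time $m-1$ permit a symmetric difference exceeding greedy's $g_m^{\text{greedy}} = 2n-m-2$. The inductive step instead rests on the joint inequality
\[ T_\sigma(m-1) + g_{\max}^\sigma(m) \le N^{\text{greedy}}(m), \]
expressing that any ``surplus'' in the call-$m$ gain is paid for by a deficit in cumulative total at time $m-1$. Since $\sigma$'s actual call-$m$ gain is at most $g_{\max}^\sigma(m)$, this inequality together with the inductive hypothesis closes the induction.

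The hard part is precisely the phase-2 joint bound. A promising route is to show that greedy's sorted knowledge-size vector at time $m-1$ weakly majorises every other reachable sorted vector; the joint bound then follows immediately. Establishing the majorisation will require a classification of reachable configurations, the delicate point being how concentrating knowledge (raising the top entries of the sorted vector) constrains the available symmetric-difference ceiling. The boundary case $m = 2n-5$ dovetails with the classical $2n-4$ lower bound of the gossiping problem, which rules out the most aggressive alternative configurations with $N \ge n^2 - 2$.
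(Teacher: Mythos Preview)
Your phase-1 argument is correct and matches the paper's: the bound $\bigl|\bigcup_{i\in I} S_i^t\bigr| \le t+|I|$ is the connected-component observation in different clothing. Phase~2, however, is a plan rather than a proof, and the plan has real gaps. The majorisation claim is unproven (you say it ``will require a classification of reachable configurations''), and even if it held it would not yield the joint bound ``immediately'': the gain $|S_a \triangle S_b|$ depends on the actual sets, not merely their sizes, so domination of the sorted size vector in prefix sums says nothing direct about the maximum symmetric difference available in a given state. For the boundary case $m=2n-5$, invoking the classical $2n-4$ lower bound is too weak: that result only gives $N \le n^2-1$ after $2n-5$ calls, whereas you need $N \le n^2-3$; ruling out $N \in \{n^2-2,\, n^2-1\}$ is precisely the hard part, and nothing in your sketch addresses it.

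The paper's phase-2 route is quite different from your inductive scheme. It first reduces, via a minimal-counterexample argument borrowed from Baker--Shostak, to the case where no don hears her own gossip; then it proves the pointwise inequality $a_i+b_i \le m+2+d_i-c_i$ relating the spread of $v_i$'s gossip, her own knowledge, her degree, and the number of calls irrelevant to her. Summing over $i$ gives $2N \le n(m+2)+2m$, which handles $n \le m \le 2n-6$ in one stroke, with no step-by-step induction. The case $m=2n-5$ then requires a separate, delicate structural analysis of mutual first and mutual final calls. None of this is captured by your outline.
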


In our proof, the range $0\le m\le n-1$ is easy to resolve, $n\le m\le 2n-6$ requires ideas from \cite{BSh}, and $m=2n-5$ is more difficult. For an example of $m=2n-4$ calls after which $N=n^2$ ($n\ge 4$), we refer the reader to \cite{Tij}, \cite{BSh}, \cite{HMS}.

Incidentally, a plethora of research is available involving ideas related to the original 'gossiping dons' problem. This includes questions about broadcasting \cite{HHL}, perpetual gossip \cite{Sco}, shuffles \cite{JJL}, and counting algorithms \cite{EMM}. These variations apply respectively to situations where only one piece of news is to be disseminated across an entire network, where the dons continually update one another, where information is replaced with an amount of a resource, or where information overload is to be prevented.

\section{Proof of Theorem \ref{main}}

\subsection{Distinction between $0\le m\le n-1$ and $n-1\le m\le 2n-5$}

Suppose dons $v_1,v_2,\dots,v_n$ make the calls $e_1,e_2,\dots,e_m$ in this order. The vertex set $\{v_1,\dots,v_n\}$ defines a {\it temporal graph} on this multiset of edges, where we say that $e_t$ occurs at time $t$ \cite{JJL}. Notice that don $v_1$ hears the gossip of $v_2$ if and only if there is a path of edges at increasing times, a {\it temporal path}, from $v_2$ to $v_1$.

Let us consider the call sequence $\epsilon_t=\{v_1,v_{t+1}\}$ for $1\le t\le n-1$ and then $\epsilon_{n-1+t}=\{v_1,v_{t+1}\}$ for $1\le t\le n-1$. In the first $n-1$ calls, $\epsilon_t$ lets $v_{t+1}$ learn the news from $v_1,\dots,v_t$ and $v_1$ learn the news from $v_{t+1}$, $1\le t\le n-1$. In the last $n-1$ calls, $v_1$ knows everything, and $\epsilon_{n-1+t}$ lets $v_{t+1}$ learn the news from $v_{t+2},\dots,v_n$. This much is clear by inducting on $t$.

We are claiming that $e_t=\epsilon_t$ ($1\le t\le m$) is optimal, with total gossip knowledge
$$N=\begin{cases} n+(2+3+\dots+(m+1)) \quad \text{if} \quad 1\le m\le n-1 \\ n^2-(1+2+\dots+(m'-1)) \quad \text{if}\quad m=2n-2-m',1\le m'\le n-1\end{cases}$$
since if $m\ge n-1$, after $e_1,e_2,\dots,e_{n-1}$, $v_{t+1}$ would be yet to learn $n-t-1$ new gossips ($1\le t\le n-1$) before $N=n^2$.

\subsection{The case $0\le m\le n-1$}

\begin{lemma}
At most $t+1$ pieces of news are communicated in call $e_t$ ($1\le t<n$).
\end{lemma}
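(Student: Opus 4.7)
The plan is to reduce the lemma to the inequality $|A_v^s|\le s+1$, where $A_v^s$ denotes the set of news known by don $v$ after the first $s$ calls, and then to derive this by a short connectedness argument on the underlying call multigraph.

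Writing $e_t=\{u,w\}$, the pieces of news communicated in call $e_t$ equal the increase in $N$, namely the symmetric difference $|A_u^{t-1}\triangle A_w^{t-1}|$, since $u$ learns precisely the items in $A_w^{t-1}\setminus A_u^{t-1}$ and $w$ those in $A_u^{t-1}\setminus A_w^{t-1}$. This is at most $|A_u^{t-1}\cup A_w^{t-1}|=|A_u^t|$, so it suffices to prove $|A_u^t|\le t+1$. The key observation is that $A_v^s$ induces a connected subgraph of the call multigraph $G_s=\{e_1,\dots,e_s\}$: given any $a\in A_v^s$, the witnessing temporal path $a=x_0,x_1,\dots,x_\ell=v$ stays inside $A_v^s$, because each tail $x_i,x_{i+1},\dots,v$ is itself a temporal path exhibiting $x_i$ as an ancestor of $v$. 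Hence $A_v^s$ is connected in $G_s$ and uses at least $|A_v^s|-1$ of the $s$ edges, whence $|A_v^s|\le s+1$.

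Applying this with $v=u$ and $s=t$ closes the argument. I do not foresee a significant obstacle; the only substantive input is the connectedness of the ancestor set, which is immediate from the definition of temporal paths. Note that the bound $t+1$ is exactly the per-call increment realised by the greedy strategy of Theorem \ref{main}, so that the telescoping $n+(2+3+\dots+(m+1))$ recorded in Section 2.1 is simply the sum of these per-call maxima, confirming optimality in the range $1\le m\le n-1$.
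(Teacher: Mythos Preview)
Your proof is correct and follows essentially the same idea as the paper's: both bound the news exchanged in $e_t$ by the size of a connected piece of the call graph $\{e_1,\dots,e_t\}$, which has at most $t+1$ vertices since it spans at most $t$ edges. The only difference is cosmetic---the paper invokes the connected component of $e_t$ directly (so connectedness is automatic), whereas you work with the tighter set $A_u^t$ and supply the short temporal-path argument to verify it is connected.
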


\begin{proof}
Consider the simple graph on the set of edges $\{e_1,e_2,\dots,e_t\}$. The news that could be spoken of in call $e_t$ can only be from a vertex in the connected component of $e_t$, which contains at most $t+1$ vertices, if connected.
\end{proof}

Summing over $t=1,\dots,m$ yields the claimed result when $0\le m\le n-1$.

\subsection{No don hears her own gossip}

\begin{lemma}
Suppose there are $n+1$ dons whose calls include a subsequence of times $t_0<t_1<\dots<t_j$ when $v_{n+1}$ calls $v_{i_1}$, who calls $v_{i_2}$, and so on, until $v_{i_j}$ calls $v_{n+1}$. Then it is possible to arrange $2$ fewer calls among the $n$ dons, excepting $v_{n+1}$, such that $v_1,\dots,v_n$ communicate at least as much. \cite{BSh}
\end{lemma}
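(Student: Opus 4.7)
The plan is to exhibit a construction of the $m-2$ modified calls and verify the knowledge bound. List the calls of $v_{n+1}$ in chronological order as $s_1<s_2<\cdots<s_k$, each $s_l$ pairing $v_{n+1}$ with a partner $w_l\in\{v_1,\ldots,v_n\}$. For notational simplicity I take $s_1=t_0$ and $s_k=t_j$ (so $w_1=v_{i_1}$, $w_k=v_{i_j}$); the general case, in which $v_{n+1}$ has additional calls outside the cycle, is handled analogously. Define a new call sequence $\sigma'$ on $\{v_1,\ldots,v_n\}$ by retaining every original call not involving $v_{n+1}$ at its original time, deleting the calls at $s_1$ and $s_k$, and for each $1<l<k$ inserting a new call $\{w_{l-1},w_l\}$ at time $s_l$. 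Counting shows that $\sigma'$ has exactly $m-2$ calls.

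First I would establish by induction on $l\in\{2,\ldots,k\}$ the chain invariant: at time $s_l^-$ in $\sigma'$, don $w_{l-1}$ knows every piece of news from $\{v_1,\ldots,v_n\}$ that $v_{n+1}$ knew at time $s_l^-$ in the original sequence. The base $l=2$ is clear, since $v_{n+1}$ has by then only absorbed $w_1$'s news, whose non-$v_{n+1}$ content $w_1$ already held. The step from $l-1$ to $l$ uses that the replacement call at $s_{l-1}$ in $\sigma'$ transfers $w_{l-2}$'s inductively-sufficient knowledge to $w_{l-1}$. Hence each intermediate replacement call at $s_l$ (with $1<l<k$) delivers to $w_l$ at least the non-$v_{n+1}$ content she originally received from $v_{n+1}$.

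The main obstacle lies in the deletion of $s_k$: in $\sigma'$ the last partner $w_k$ no longer directly receives $v_{n+1}$'s accumulated batch at $s_k$. Here the cycle hypothesis is indispensable. The non-$v_{n+1}$ calls $t_1,\ldots,t_{j-1}$ survive in $\sigma'$ and form a temporal path $w_1\to v_{i_2}\to\cdots\to w_k$; combined with the chain of replacement calls feeding knowledge forward through the $w_l$'s, this alternative route ensures $w_k$ recovers the non-$v_{n+1}$ content she would have received via $v_{n+1}$ in the original. To conclude, I would tally news item by news item: for each $u\ne v_{n+1}$, the number of dons in $\{v_1,\ldots,v_n\}$ knowing $u$ in $\sigma'$ is at least the number knowing $u$ originally, since individual losses (for instance at $w_k$) are offset by gains elsewhere owing to the fresh replacement calls spreading news independently of $v_{n+1}$. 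Summing over $u$ yields the claimed inequality; this compensation bookkeeping is where the main technical work lies.
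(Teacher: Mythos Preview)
Your construction does not achieve the claimed bound: the ``compensation bookkeeping'' you defer to the end actually fails. Take six dons and the calls (in order)
\[
\{v_6,v_1\},\ \{v_1,v_2\},\ \{v_4,v_3\},\ \{v_6,v_3\},\ \{v_2,v_6\},\ \{v_2,v_5\}.
\]
Here $v_6$ hears her own gossip via $v_6\!\to\! v_1\!\to\! v_2\!\to\! v_6$, her partners are $w_1=v_1,\ w_2=v_3,\ w_3=v_2$ at times $s_1=1,\ s_2=4,\ s_3=5$, and your simplifying hypothesis $s_1=t_0$, $s_k=t_j$ holds. Your $\sigma'$ is then $\{v_1,v_2\},\ \{v_4,v_3\},\ \{v_1,v_3\},\ \{v_2,v_5\}$. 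Restricting to dons and items $1,\dots,5$, the original total is $2+5+3+2+5=17$, while $\sigma'$ gives $4+3+4+2+3=16$. For item~$3$ specifically, four dons knew it originally but only three do in $\sigma'$, so even your item-by-item inequality is false. The reason is structural: your replacement chain terminates at $w_{k-1}$, and the surviving cycle path $w_1\to\cdots\to w_k$ only transports what $w_1$ knew at time $t_1$, which precedes the chain calls and hence excludes the later partners' news. Once $w_k$ misses information at time $s_k$, the loss propagates downstream (here to $v_5$), and there is no mechanism forcing an offsetting gain.

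The paper's construction avoids this by routing the intermediate partners \emph{into} the cycle path rather than alongside it: for each $r$ with $p<r<q$, the partner $u_r$ calls whichever vertex currently lies at the head of the temporal path $v_{i_1}\to\cdots\to v_{i_j}$ at time $s_r$. This simultaneously deposits $u_r$'s news onto the path and hands $u_r$ the accumulated knowledge carried so far; since the path ends at $u_q=w_k$, everything still reaches $w_k$, and the conclusion holds pointwise for every don. In the example above this yields the single replacement call $\{v_3,v_2\}$ at time~$4$, and one checks that each of $v_1,\dots,v_5$ then knows at least as much as before. The missing idea in your attempt is precisely this use of the existing temporal path as the backbone.
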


\begin{proof}
It is always possible to arrange $1$ fewer calls: if $v_{n+1}$ calls $u_1,u_2,\dots,u_s$ in this order, then instead, let $u_r$ call $u_{r+1}$ ($1\le r<s$) at the time she would call $v_{n+1}$. This way, transmission via $v_{n+1}$ is replaced by transmission along the temporal path $u_1u_2\dots u_s$.

However, if between the calls of $v_{n+1}$ with $u_p$ and $u_q$ ($p<q$), there is a temporal path from $u_p$ to $u_q$, then the calls of $u_p,\dots,u_{q-1}$ can be replaced as follows. Whenever $p<r<q$, let $u_r$ call the don currently at the end of the path when she would call $v_{n+1}$.

Therefore, if $v_{n+1}$ hears her own information, the other $n$ dons can communicate to the same effect, while sparing a call for $u_s$ and $u_p$.   
\end{proof}

Thus, we deduce that if there are $2n-m'$ calls among $n+1$ dons with $N>(n+1)^2-(1+2+\dots+(m'-1))$, and someone hears back her news, then there are $2n-2-m'$ calls among $n$ dons with $N>n^2-(1+2+\dots+(m'-1))$, because that don and her news contributes to $N$ by at most $(n+1)+n=(n+1)^2-n^2$. 

It is known that $m'= n$ cannot occur, and if $m'<n$, then we can pass to a smaller counterexample $n$. Therefore, in a counterexample with minimal $n$, there is no don who hears her own gossip.

\subsection{The case $n\le m\le 2n-5$}

\begin{lemma}
Suppose that $v_i$ does not hear her own gossip. Let $a_i$ be the number of dons who know the gossip of $v_i$ and let $v_i$ know $b_i$ gossips. Further, let $c_i$ be the number of calls that neither inform $v_i$ nor pass on the gossip of $v_i$, and let $d_i$ be the number of calls with $v_i$. Then
$$a_i+b_i\le m+2+d_i-c_i.$$
\end{lemma}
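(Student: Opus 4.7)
The plan is to bound $b_i$ and $a_i$ separately using temporal connectivity, then combine them via inclusion--exclusion on the natural partition of calls around $v_i$. Let $P_i$ denote the set of calls lying on some temporal path ending at $v_i$ (exactly the calls that inform $v_i$), and $F_i$ the set of calls on some temporal path starting at $v_i$ (exactly those that pass on $v_i$'s gossip). A call is counted by $c_i$ iff it lies in neither, so
\[
    m = |P_i| + |F_i| - |P_i \cap F_i| + c_i.
\]

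First I would argue $b_i \le |P_i| + 1$ by a connectivity argument. For every $e \in P_i$, at least one endpoint of $e$ reaches $v_i$ through subsequent calls that themselves lie in $P_i$, by the definition of $P_i$; hence the multigraph spanned by $P_i$, with $v_i$ included as a vertex, is connected. A connected multigraph on $v$ vertices has at least $v - 1$ edges, so $P_i$ touches at most $|P_i| + 1$ dons, and every don whose gossip reaches $v_i$ must be among them. The time-reversed version of the same argument yields $a_i \le |F_i| + 1$.

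The main obstacle will be showing $|P_i \cap F_i| \le d_i$ --- this is the one place where the hypothesis that $v_i$ does not hear her own gossip is used. My plan is to suppose a call $e_t = \{u, w\}$ lies in $P_i \cap F_i$ with $v_i \notin \{u, w\}$ and derive a contradiction. Being in $F_i$ forces one of $u, w$ to already know $v_i$'s gossip just before $e_t$ (as the predecessor of $e_t$ on a temporal path out of $v_i$), so after $e_t$ both endpoints do; being in $P_i$ with $v_i \notin \{u, w\}$ supplies a non-trivial temporal path from one of $u, w$ to $v_i$ using calls at times $> t$. Transmitting $v_i$'s gossip along this path returns it to $v_i$, contradicting the hypothesis. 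Thus every call of $P_i \cap F_i$ involves $v_i$, giving $|P_i \cap F_i| \le d_i$.

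Combining the three estimates,
\[
    a_i + b_i \;\le\; |P_i| + |F_i| + 2 \;=\; m - c_i + |P_i \cap F_i| + 2 \;\le\; m + 2 + d_i - c_i,
\]
as required.
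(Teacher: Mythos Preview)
Your proof is correct and follows essentially the same approach as the paper: both hinge on the observation that a call not involving $v_i$ which carries $v_i$'s gossip cannot also lie on a temporal path towards $v_i$ (else $v_i$ hears her own gossip), together with the connectivity bound $b_i\le |P_i|+1$. Your packaging via inclusion--exclusion on $P_i$ and $F_i$ is slightly more symmetric than the paper's version, which singles out the ``news'' calls as a subset of $F_i$, but the underlying argument is the same.
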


\begin{proof}
Highlight all the calls where $v_i$'s gossip is news. There are at least $a_i-1$ such calls, of which at least $a_i-1-d_i$ are not a direct call with $v_i$. Since $v_i$ never hears her own gossip, such a call cannot be in a temporal path to $v_i$, so these calls are not in the temporal paths that would inform $v_i$ with $b_i-1$ other gossips. Now
$$(a_i-1-d_i)+(b_i-1)+c_i\le m,$$
which rearranges to what was claimed.
\end{proof}

The gossiping number is $\sum_{i=1}^n a_i=N=\sum_{i=1}^n b_i$, while $\sum_{i=1}^n d_i=2m$. Summing both sides of the Lemma therefore yields $$2N\le n(m+2)+2m.$$
Subtracting $n(m+2)+2m$ from $2n^2-2(1+2+\dots+(m'-1))$ yields
$$-m'^2+(n+3)m'-4(n+1)=(m'-4)(n+1-m'),$$
which is non-negative in the range $4\le m'\le n-1$. Therefore, we have proven Theorem \ref{main} whenever $m\le 2n-6$, and from now on we shall consider $m=2n-5$.

\subsection{The subcase $m=2n-5$}

\begin{lemma}
After the first $n-2$ calls, either there is a don who has not been in a call, or the calls can be permuted without changing the knowledge gained in each so that for $1\le i\le n-3$, $e_{n-2+i}$ is not disjoint from all of $e_{n-1},e_n,\dots,e_{n-3+i}$.
\end{lemma}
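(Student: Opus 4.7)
The plan is to verify the second alternative assuming every don appears in some call $e_1,\dots,e_{n-2}$; otherwise the first alternative is immediate. Since swapping two adjacent calls preserves the knowledge gained in each precisely when those calls are disjoint, the admissible reorderings are exactly the linear extensions of the DAG $D$ with $e_s\to e_t$ whenever $s<t$ and $e_s\cap e_t\ne\emptyset$. I seek such a linear extension whose last $n-3$ calls, read in order, form an attach chain -- each call after the first sharing a vertex with some earlier call in the suffix.

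The first step is to anchor the right end of the chain: the original last call $e_{2n-5}$ must share a vertex with some earlier call, because otherwise both its endpoints appear in no other call, and in particular not among $e_1,\dots,e_{n-2}$, contradicting the hypothesis. Thus $e_{2n-5}$ can remain at position $2n-5$ and seeds the attach chain.

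From this anchor, I would build the chain backward by inductively selecting, at each position $p\ge n-1$, an unplaced call that both shares a vertex with the already-placed chain and is maximal in the DAG restricted to unplaced calls. The first condition ensures that the chain extends with the attach property; the second ensures that the call can be legally placed at position $p$ in the linear extension. The non-emptiness of the candidate pool at each step will be argued from the hypothesis: since the chain so far spans at most $2n-4-p \le n-3$ dons, at least three dons lie outside $V_{\text{chain}}$, and the hypothesis supplies calls among $e_1,\dots,e_{n-2}$ containing those dons, from which one can extract a bridge to the chain.

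The main obstacle I anticipate is ensuring that a bridge call maximal in the restricted DAG actually exists. If the latest chain-sharing unplaced call $e_t$ has a strictly later unplaced successor $e_u$ in $D$ that does not share a vertex with the chain, then placing $e_t$ at position $p$ would violate the linear-extension constraint. Overcoming this calls for a rerouting argument -- for instance, first placing $e_u$ so that its vertex shared with $e_t$ is absorbed into $V_{\text{chain}}$, turning $e_u$ itself into a bridge -- and verifying that the procedure terminates. Managing this threading carefully, while invoking the hypothesis at each turn, is the delicate combinatorial heart of the proof.
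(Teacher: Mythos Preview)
Your plan has a genuine gap, and it is precisely the obstacle you flag but do not resolve. The rerouting fix you sketch cannot work as stated: you propose ``first placing $e_u$'' so that its shared vertex with $e_t$ enters $V_{\text{chain}}$, but your own invariant demands that whatever is placed at position $p$ already meet the current chain, and by hypothesis $e_u$ does not. Placing $e_u$ anyway breaks the attach property at that position, and nothing later repairs it. There is also a weaker problem upstream: your non-emptiness argument only exhibits unplaced calls containing dons \emph{outside} $V_{\text{chain}}$; it does not produce an unplaced call that \emph{meets} $V_{\text{chain}}$, which is what a bridge must do. Finally, note a direction mismatch: the lemma asks that each later suffix call meet an \emph{earlier} one, whereas your backward construction ensures each earlier-positioned call meets a \emph{later} one. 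Both yield a connected suffix, which is all the application needs, but you should be aware that you are proving a variant rather than the literal statement.

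The paper avoids all of this by arguing extremally on the first $n-2$ calls rather than constructively on the last $n-3$. Viewing $e_1,\dots,e_{n-2}$ as a graph on $n$ vertices with no isolated vertex, at least two components are trees; by swaps one may take $e_{n-2}$ to lie in the smallest tree and $e_{n-3}$ in the next smallest. If the conclusion fails, some $e_{n-2+i}$ is disjoint from $e_{n-2},\dots,e_{n-3+i}$ and can be swapped down to position $n-2$; the new first-$(n-2)$ graph then has a strictly smaller smallest tree, unless the incoming edge has both endpoints inside that tree, in which case it is also disjoint from $e_{n-3}$ and can be swapped one step further, shrinking the next-smallest tree instead. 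Infinite descent on the tree sizes eventually forces a singleton component, i.e., an isolated vertex, which is the other alternative. This sidesteps entirely the DAG-maximality threading that blocks your greedy approach.
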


\begin{proof}
The key idea here is to swap consecutive calls that are disjoint \cite{KSh}. If $e_{n-2+i}$ were disjoint from every call since $e_{n-2}$, it could replace $e_{n-2}$ after $i$ such swaps. 

At least two of the connected components in the graph with edges $e_1,\dots,e_{n-2}$ are trees, and if there is an isolated vertex, we are done. Otherwise, after making swaps, we may suppose that $e_{n-2}$ is in the smallest of the trees and $e_{n-3}$ is in the next smallest. If a later edge can replace $e_{n-2}$, this decreases the size of the smallest tree or else maintains it, so both endpoints of that edge in the tree. From thence, it can replace $e_{n-3}$, decreasing the size of the next smallest or smallest tree. The claim follows by infinite descent.
\end{proof}

In the case where a don $v_i$ has not called until $e_{n-2}$, at most $n-2$ dons learn her gossip, and so $a_i\le n-2$. In the case where the last $n-2$ calls are connected as a graph, there is an isolated vertex $v_i$ that only learns from the first $n-3$ calls, and so $b_i\le n-2$.

By reversing the calls $e_1,\dots,e_m$ to $e_m,\dots,e_1$, the roles of $a_i$ and $b_i$ exchange. Hence, without loss of generality, $a_i\le n-2$ for some $i$. For other values of $i$, $a_i\le n$, so $N\le n^2-2$. Our aim is to demonstrate $N\le n^2-3$ by ruling out the case of equality: $a_i=n$ for all $i$ except one with $a_i=n-2$.

\begin{lemma}
For each don, look at who they called first. If, say, the call $\{v_1,v_2\}$ was the first for $v_1$, but $\{v_2,v_3\}$ was the first for $v_2$, then $v_3$ cannot hear the news from $v_1$, whence $a_1\neq n$.
\end{lemma}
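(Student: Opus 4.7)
My plan is to argue by contradiction, using the standing assumption from Section 2.3 that we are in a minimal counterexample in which no don hears her own gossip. The entire argument amounts to a single splicing of temporal paths, so I expect it to be short.

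First I would pin down the order of the two named calls. Since $\{v_2,v_3\}$ is $v_2$'s first call while $\{v_1,v_2\}$ also involves $v_2$ but is distinct from it, $\{v_2,v_3\}$ must occur strictly earlier; write $t_1<t_2$ for the times at which $\{v_2,v_3\}$ and $\{v_1,v_2\}$ take place, respectively.

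Next I would assume for contradiction that $v_3$ hears $v_1$'s gossip. Then there is a temporal path from $v_1$ to $v_3$, and since $\{v_1,v_2\}$ is $v_1$'s first call the initial edge of this path is forced to be $\{v_1,v_2\}$ at time $t_2$. The remainder has the form $v_2\to w_1\to\cdots\to w_k\to v_3$ along strictly increasing times $s_1<s_2<\cdots<s_{k+1}$, all exceeding $t_2$. The key move is to prepend the call $\{v_2,v_3\}$ at time $t_1$: because $t_1<t_2<s_1$, the sequence $\{v_3,v_2\},\{v_2,w_1\},\{w_1,w_2\},\dots,\{w_k,v_3\}$ is a bona fide temporal path from $v_3$ back to herself, so $v_3$ would hear her own gossip, contradicting the reduction. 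Consequently $v_3$ never learns $v_1$'s news, and therefore $a_1\le n-1<n$.

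I do not anticipate any real obstacle; the only thing that needs care is checking that the strict time inequalities chain up properly so the spliced edge sequence is genuinely temporal, and verifying that the argument survives the degenerate case $k=0$, in which the continuation from $v_2$ reaches $v_3$ by a single later call $\{v_2,v_3\}$ at some $s_1>t_2$.
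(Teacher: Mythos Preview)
Your approach is the same as the paper's, but there is one small slip. You assert that the initial edge of a temporal path from $v_1$ to $v_3$ is \emph{forced} to be $\{v_1,v_2\}$ at time $t_2$; this is not true. The first edge must be incident to $v_1$ and hence occur at some time $\tau_1\ge t_2$, but it could perfectly well be a later call $\{v_1,v_k\}$ with $k\ne 2$. (For instance, with calls $\{v_2,v_3\},\{v_1,v_2\},\{v_1,v_5\},\{v_5,v_3\}$ in that order, the path $v_1,v_5,v_3$ begins with $\{v_1,v_5\}$.) The repair is immediate and is exactly what the paper's phrase ``begins with $\{v_1,v_2\}$ or after it'' is gesturing at: if $\tau_1>t_2$, prepend the edge $\{v_2,v_1\}$ at time $t_2$ to obtain a temporal path $v_2,v_1,u_1,\dots,v_3$; if $\tau_1=t_2$, the first edge really is $\{v_1,v_2\}$ and your argument goes through verbatim. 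In either case you get a temporal path from $v_2$ to $v_3$ with all times exceeding $t_1$, and prepending $\{v_3,v_2\}$ at time $t_1$ produces the desired temporal cycle through $v_3$. With this one-line fix your proof coincides with the paper's.
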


\begin{proof}
There can only be one call between any two dons, unless one of them hears their own gossip. Now if there were a temporal path from $v_1$ to $v_3$, as it begins with $\{v_1,v_2\}$ or after it, we would find a temporal path from $v_2$ to $v_3$ after the call $\{v_2,v_3\}$. However, $v_3$ does not hear her own gossip.
\end{proof}

If a call is the first for both callers, we say it is a mutual first call, and similarly for final calls. Notice that if $\{v_1,v_2\}$ is a mutual first call, then afterwards, whoever knows $v_1$'s gossip also knows $v_2$'s gossip. Hence, for the $i$ with $a_i=n-2$, the first call is not mutual. We also know that if $a_i=n$, the first call is mutual. Thus, there are $\frac{n-1}{2}$ mutual first calls. As for mutual final calls, we would be done if there were three distinct vertices whose final call is not mutual, with $b_i\le n-1$ thrice. Hence, for parity reasons, $\frac{n-1}{2}$ final calls are mutual. The mutual first and mutual final calls are different, else two dons would be disconnected from the others, whence there are $n-4$ miscellaneous calls.

\begin{lemma}
Suppose there are $\frac{n-1}{2}$ mutual first calls, $\frac{n-1}{2}$ mutual final calls, and $n-4$ miscellaneous calls. If the miscellaneous calls do not isolate a vertex, then $c_i\ge 1$ for all $i$.
\end{lemma}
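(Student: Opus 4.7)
The plan is to argue by contradiction: assume $c_i=0$ for some $i$ and derive that some don must then be in no miscellaneous call, contradicting the hypothesis.

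First I would extract two dual structural facts forced by $c_i=0$. Any mutual first call $\{v_j,v_k\}$ with $v_i\notin\{v_j,v_k\}$ cannot pass on $v_i$'s gossip, since at that moment both $v_j,v_k$ are speaking for the first time and so know only their own gossips; under $c_i=0$ it must therefore lie on a temporal path ending at $v_i$. Dually, any mutual final call not involving $v_i$ cannot lie on a temporal path to $v_i$ --- its two endpoints have no subsequent calls --- so it must pass on $v_i$'s gossip, requiring one endpoint to already know it. These two facts force each such mutual first call to have a chain of subsequent calls reaching $v_i$, and each such mutual final call to have a chain of earlier calls emanating from $v_i$.

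The main step is then a temporal tracking argument. For each don $v_l\ne v_i$ I would define $\tau_l$ as the earliest time $v_l$ knows $v_i$'s gossip (or $\infty$) and $\sigma_l$ as the latest time $v_l$ still has a temporal path to $v_i$ (or $-\infty$). Under $c_i=0$, every call $\{v_l,v_{l^*}\}$ at time $t$ must satisfy either $t>\tau_l$ or $t>\tau_{l^*}$ (forward relevance), or $t\le\sigma_l$ or $t\le\sigma_{l^*}$ (backward relevance). By the two structural facts the $\frac{n-3}{2}$ non-$v_i$ mutual first calls together with the $\frac{n-3}{2}$ non-$v_i$ mutual final calls already anchor the bulk of the backward and forward flows respectively, leaving the $n-4$ miscellaneous calls as the only flexible conduits for threading the two flows. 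Using that the miscellaneous edges cover every vertex, and tracking how $\tau_l$ and $\sigma_l$ propagate through the matching structure of $M_1\cup M_n$, I would locate endpoints $v_l,v_{l^*}$ of a miscellaneous call for which both relevance conditions fail, contradicting $c_i=0$.

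The hard part is this last existence argument: the pigeonhole must use both the sparsity of miscellaneous edges ($n-4$ for $n$ vertices) and the matching structure of $M_1\cup M_n$. I expect to handle it by exploiting $M_1\cap M_n=\emptyset$ (established earlier by the paper's disconnection remark) together with the prior lemma that no don hears her own gossip, which rules out alternate routings that would otherwise plug the counting gap by creating a temporal cycle through $v_i$ or through the endpoints of the miscellaneous call in question. A subtlety is the boundary case $v_i\in\{u_0,u_\infty\}$, where $v_i$'s first or last call is itself miscellaneous; I would handle it by noting that then $v_i$'s direct calls contribute little to the forced forward/backward flows, tightening the pigeonhole rather than relaxing it.
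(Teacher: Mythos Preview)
Your plan never actually reaches a proof: the ``hard part'' you yourself flag---locating a miscellaneous call on which both relevance conditions fail---is left as an expectation, with only a list of ingredients ($M_1\cap M_n=\emptyset$, no don hears her own gossip, a vague pigeonhole on $n-4$ edges against $n$ vertices) and no argument connecting them. The $\tau_l,\sigma_l$ framework is a correct reformulation of what $c_i=0$ means, and your two structural facts about mutual first and mutual final calls are right, but none of this forces the existence of the bad miscellaneous edge. As written this is a restatement of the goal, not a proof of it.

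You have also missed the idea that makes the lemma a three-line argument. The paper simply observes that the $n-4$ miscellaneous edges on $n$ vertices form a graph with at least $n-(n-4)=4$ connected components, and (under the no-isolated-vertex hypothesis) each component contains at least one edge. Now in any temporal path starting or ending at $v_i$, a mutual first call can only be the very first edge (hence $v_i$'s own) and a mutual final call can only be the very last; stripping these off leaves a contiguous walk of miscellaneous edges that begins at $v_i$ or at $v_i$'s mutual-first partner, and ends at $v_i$ or at $v_i$'s mutual-final partner. Thus every miscellaneous edge on such a path lies in the miscellaneous-graph component of one of at most three vertices ($v_i$, its first caller, its final caller). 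Any edge in a fourth component is therefore on no temporal path to or from $v_i$, giving $c_i\ge 1$ immediately. Your temporal-tracking machinery is not needed; the whole content is the component count $n-(n-4)\ge 4>3$.
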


\begin{proof}
The miscellaneous edges determine at least $4$ connected components as a graph. A temporal path beginning at $v_i$ or ending at $v_i$ contains miscellaneous edges from the component of $v_i$, its first caller, or its final caller. Therefore, a miscellaneous edge in a fourth component is not involved in communicating to $v_i$ or from $v_i$.
\end{proof}

Recall that $a_i+b_i\le m+2+d_i-c_i$, which turns into 
$$d_i+(n-a_i)+(n-b_i)\ge 3+c_i.$$
If it were the case that $c_i\ge 1$ for all $i$, then summing would yield $2m+(n^2-N)\ge 4n$, whence $N\le n^2-5$. Otherwise, there is a vertex $v_i$ that is isolated by miscellaneous calls, and so $d_i\le 2$.

Collecting the facts, we find a unique vertex whose first call is not mutual, say $v_1$, with $a_1=n-2$, as well as a unique vertex $v_j$ whose final call is not mutual, with $b_j<n$. Since neither of these is $v_i$, $a_i=n$, and since the same information is known by mutual final callers, $b_i=n$, unless we are done. This implies a contradiction, $2\ge 3+c_i$.

\bigskip

Theorem \ref{main} is thus established. $\qed$

\section{Conclusion}

Although the bulk of our proof built on different solutions to the Gossiping Problem, the key idea for the subcase $m=2n-5$ is the heart of a more general proof that works if calls are between $k$ people, where $k\ge 2$ is fixed \cite{KSh}. Hence, a logical next question for Greedy Gossiping is whether the same phenomenon occurs if the calls form a temporal $k$-uniform hypergraph.

\begin{conjecture}
Let $n$ dons gossip in conference calls of $k$ people. Let $m$ be a fixed number of calls, less than the minimal number of calls needed for all the dons to know all the gossip. Then the number of items they know is maximal when every call maximises the amount of new gossip overheard.
\end{conjecture}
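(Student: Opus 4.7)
The plan is to mirror the four-step scheme of Theorem~\ref{main} with each component adapted to the hypergraph setting. First, I would identify the greedy conference strategy: distinguish one don $v_1$, and let $v_1$ convene a $k$-conference with $k-1$ uninformed dons at each of the first $\lceil (n-1)/(k-1)\rceil$ steps, then let $v_1$ convene conferences to inform $k-1$ uninformed dons at each subsequent step. A direct induction shows the resulting $N$ grows by the maximum possible per call and traces out a piecewise-arithmetic sequence in $m$.

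For the early range, the per-call bound generalises cleanly. In the hypergraph formed by the first $t$ conferences, the connected component containing $e_t$ has at most $(k-1)t+1$ vertices, so the union of the $k$ participants' knowledge is bounded by $(k-1)t+1$. Each piece in this union contributes at most $k-1$ to the increase in $N$, so the $t$-th call increases $N$ by at most $(k-1)\bigl((k-1)t+1\bigr)$, which matches the greedy value. Summation then yields optimality throughout the early phase.

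For the middle range, I would generalise the \cite{BSh} reduction: if $v_{n+1}$ hears her own information back along a temporal chain of $k$-conferences, I would pick a thread through the chain and reroute it via the other $k-1$ participants at each step, saving at least two calls while preserving communication among the remaining $n$ dons. Then the analogue of $a_i+b_i\le m+2+d_i-c_i$ (obtained by classifying calls according to whether they involve $v_i$, transmit her gossip, or lie on a temporal path to $v_i$) adapts with $\sum d_i=km$ replacing $2m$. Summing over $i$ should produce an explicit quadratic bound eliminating all $m$ up to a threshold a few calls short of the $k$-gossiping number.

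The hardest part, as in the base case, is the boundary subcase near that threshold. The swapping idea of \cite{KSh}, exchanging temporally adjacent conferences whose vertex sets are disjoint, extends directly and should force either an isolated vertex or an overlap structure among the last few conferences. The main obstacle is the structural analysis around mutual first and mutual final calls: a $k$-conference can be the first call for any subset of its participants, so `mutuality' admits many gradations, and the clean parity argument for $k=2$ splinters into many sub-cases. The conjecture will likely require a more refined accounting of how many conferences are mutual-first, along with a sharper lower bound on $c_i$ to rule out the extremal configurations. I expect this combinatorial bookkeeping to be the real technical challenge, perhaps resolved by induction on $k$ combined with the base case.
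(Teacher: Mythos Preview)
The statement you are addressing is a \emph{conjecture} in the paper, not a theorem: the paper offers no proof of it at all. It appears in the Conclusion as an open problem motivated by Theorem~\ref{main} and the observation that the swapping technique of \cite{KSh} already works for $k$-uniform hypergraphs. So there is nothing in the paper to compare your proposal against.

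As for the proposal itself, it is a reasonable research plan rather than a proof, and you are candid about this: you write that the mutual-first/mutual-final analysis ``splinters into many sub-cases'' and that the boundary subcase ``will likely require'' sharper tools you do not yet have. That is an honest assessment, but it means the proposal does not establish the conjecture. A few of the earlier steps also need more care. Your per-call bound in the early range, $(k-1)\bigl((k-1)t+1\bigr)$, overcounts: if the component of $e_t$ has $s\le (k-1)t+1$ vertices, the $k$ callers already collectively know at most $s$ pieces, and the increase in $N$ is at most $ks$ minus what they already knew individually, not $(k-1)s$; the correct bound must also track how many of those $s$ pieces are already common to several callers. Likewise, the \cite{BSh}-style rerouting for $k\ge 3$ is not obviously a two-call saving: when $v_{n+1}$ is removed from a $k$-conference, the remaining $k-1$ participants still meet, so the call is not eliminated but only shrunk, and you would need to argue separately that two entire calls can be dispensed with. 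These are genuine gaps in the sketch, not just bookkeeping, and they are part of why the paper leaves the statement as a conjecture.
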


\section{Acknowledgements}

The author is especially grateful to Imre Leader for proofreading and to Trinity College Cambridge for financial support (Internal Graduate Studentship).

\textsc{Department of Pure Mathematics and Mathematical Statistics, University of Cambridge, Wilberforce Road, Cambridge CB3 0WB.} \\ \\
\textit{E-mail address:} \texttt{kkw25@cam.ac.uk}

\end{document}